\theoremstyle{plain}
\newtheorem{thm}{Theorem}[section]
\newtheorem{lem}{Lemma}[section]
\numberwithin{equation}{section}
\DeclareMathOperator{\td}{d\mspace{-2mu}}
\begin{document}

\title[Two sharp inequalities for bounding Seiffert mean]
{Two sharp inequalities for bounding the Seiffert mean by the arithmetic, centroidal, and contra-harmonic means}

\author[W.-D. Jiang]{Wei-Dong Jiang}
\address[Jiang]{Department of Information Engineering, Weihai Vocational University, Weihai City, Shandong Province, 264210, China}
\email{\href{mailto: W.-D. Jiang <jackjwd@163.com>}{jackjwd@163.com}}

\author[J. Cao]{Jian Cao}
\address[Cao]{Department of Mathematics, Hangzhou Normal University, Hangzhou City, Zhejiang Province, 310036, China}
\email{\href{mailto: J. Cao <21caojian@gmail.com>}{21caojian@gmail.com}, \href{mailto: J.Cao <21caojian@163.com>}{21caojian@163.com}}

\author[F. Qi]{Feng Qi}
\address[Qi]{School of Mathematics and Informatics\\ Henan Polytechnic University\\ Jiaozuo City, Henan Province, 454010\\ China; Department of Mathematics\\ School of Science\\ Tianjin Polytechnic University\\ Tianjin City, 300387\\ China}
\email{\href{mailto: F. Qi <qifeng618@gmail.com>}{qifeng618@gmail.com}, \href{mailto: F. Qi <qifeng618@hotmail.com>}{qifeng618@hotmail.com}, \href{mailto: F. Qi <qifeng618@qq.com>}{qifeng618@qq.com}}
\urladdr{\url{http://qifeng618.wordpress.com}}

\subjclass[2010]{Primary 26E60; Secondary 11H60, 26A48, 26D05, 33B10}

\keywords{Seiffert mean; Arithmetic mean; Centroidal mean; Contra-harmonic mean; Inequality; Best constant}

\thanks{The first author was partially supported by the Project of Shandong Province Higher Educational Science and Technology Program under grant No. J11LA57}

\begin{abstract}
In the paper, the authors find the best possible constants appeared in two inequalities for bounding the Seiffert mean by the linear combinations of the arithmetic, centroidal, and contra-harmonic means.
\end{abstract}

\maketitle

\section{Introduction}

For $a,b>0$ with $a\ne b$, the Seiffert mean $T(a,b)$ and the centroidal mean $\overline{C}(a,b)$ are defined respectively by
\begin{equation}\label{seiffert-eq1.1}
    T(a,b)=\frac{a-b}{2\arctan \bigl(\frac{a-b}{a+b}\bigr)}
\end{equation}
and
\begin{equation}\label{seiffert-eq1.2}
  \overline{C}(a,b)=\frac{2\bigl(a^2+ab+b^2\bigr)}{3(a+b)}.
\end{equation}
It is well known that
\begin{gather*}
A(a,b)=\frac{a+b}2,\quad   G(a,b)=\sqrt{ab}\,,\quad   S(a,b)=\sqrt{\frac{a^2+b^2}2}\,,\\
C(a,b)=\frac{a^2+b^2}{a+b},\quad  M_p(a,b)=\sqrt[\leftroot{-2}\uproot{2}p]{\frac{a^p+b^p}2}
\end{gather*}
for $p\ne0$ are respectively the arithmetic, geometric, root-square, contra-harmonic and $p$-th power means of two positive numbers $a$ and $b$, that the $p$-th power means $M_{p}(a,b)$ is continuous and strictly increasing with respect to $p\in \mathbb{R}$ for fixed $a,b>0$ with $a\ne b$, and that the inequalities in
\begin{multline}
G(a,b)=M_0(a,b)<A(a,b)=M_1(a,b)<\overline{C}(a,b)\\*
<S(a,b)=M_2(a,b)<C(a,b)
\end{multline}
hold for $a,b>0$ with $a\ne b$. For more information on results of mean values, please refer to, for example,~\cite{emv-log-convex-simple.tex, Guo-Qi-Filomat-2011-May-12.tex, abstract, royal-98, Cheung-Qi-Rev.tex} and closely related references therein.
\par
In~\cite{seiffert13}, Seiffert proved the double inequality
\begin{equation}
A(a,b)=M_1(a,b)<T(a,b)<M_2(a,b)=S(a,b)
\end{equation}
for $a,b>0$ with $a\ne b$.
In~\cite{seiffert14}, H\"{a}st\"{o} showed that the function $\frac{T(1,x)}{M_p(1,x)}$ is increasing with respect to $x\in(0,\infty)$ if $p\le 1$.
In~\cite{seiffert15, seiffert3}, the authors demonstrated that the double inequalities
\begin{equation}
\alpha_1S(a,b)+(1-\alpha_1)A(a,b)<T(a,b)<\beta_1S(a,b)+(1-\beta_1)A(a,b)
\end{equation}
and
\begin{multline}
C\bigl(\alpha_2 a+(1-\alpha_2)b,\alpha_2 b+(1-\alpha_2)a\bigr)<T(a,b)\\*
<C\bigl(\beta_2 a+(1-\beta_2)b,\beta_2 b+(1-\beta_2)a\bigr)
\end{multline}
hold for $a,b>0$ with $a\ne b$ if and only if
\begin{align}
\alpha_1&\le \frac{4-\pi}{\bigl(\sqrt{2}\,-1\bigr)\pi}, & \beta_1&\ge \frac{2}{3}, &
\alpha_2&\le \frac12\Biggl(1+\sqrt{\frac4\pi-1}\,\Biggr), & \beta_2&\ge \frac{3+\sqrt{3}\,}6.
\end{align}
For more information on this topic, please refer to recently published papers~\cite{seiffert1, seiffert10, seiffert11, seiffert12, seiffert4, seiffert5, seiffert6, seiffert7, background-Jiang-Qi.tex, seiffert2, seiffert8, seiffert9} and cited references therein.
\par
For positive numbers $a, b>0$  with $a\ne b$, let
\begin{equation}\label{J(x)-dfn-eq}
J(x)=\overline{C}\bigl(xa+(1-x)b,xb+(1-x)a\bigr)
\end{equation}
on $\bigl[\frac{1}{2},1\bigr]$. It is not difficult to directly verify that $J(x)$ is continuous and strictly increasing on $\bigl[\frac{1}{2},1\bigr]$ and to notice that
\begin{equation}
J\biggl(\frac12\biggr)=A(a,b)<T(a,b) \quad\text{and}\quad J(1)=\overline{C}(a,b)>T(a,b).
\end{equation}
Therefore, it is much natural to ask a question: What are the best constants $\alpha\ge\frac12$ and $\beta\le1$ such that the double inequality
\begin{equation}\label{seiffert-eq1.4}
\overline{C}\bigl(\alpha a+(1-\alpha)b,\alpha b+(1-\alpha)a\bigr)<T(a,b)
<\overline{C}\bigl(\beta a+(1-\beta)b,\beta b+(1-\beta)a\bigr)
\end{equation}
holds for $a,b>0$ with $a\ne b$?
\par
The following Theorem~\ref{seiffert-th1.1}, the first main result of this paper, gives an affirmative answer to this question.

\begin{thm}\label{seiffert-th1.1}
For positive numbers $a,b>0$ with $a\ne b$, the double inequality~\eqref{seiffert-eq1.4} is valid if and only if
\begin{equation}\label{seiffert-th1.1-constant}
\alpha\le \frac{1}{2}\Biggl(1+\sqrt{\frac{12}{\pi}-3}\,\Biggr)\quad \text{and}\quad \beta=1.
\end{equation}
\end{thm}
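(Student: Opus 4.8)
The plan is to exploit homogeneity and symmetry to collapse \eqref{seiffert-eq1.4} into a one-variable inequality. Since $T$, $\overline{C}$, and the map $x\mapsto\overline{C}\bigl(xa+(1-x)b,xb+(1-x)a\bigr)$ are all symmetric in $a,b$ and positively homogeneous of degree one, I may assume $a>b$ and normalize $a+b=2$, writing $a=1+s$ and $b=1-s$ with $s\in(0,1)$. Using $\overline{C}(u,v)=\frac{2[(u+v)^2-uv]}{3(u+v)}$ together with the identity $uv=ab+x(1-x)(a-b)^2$ for $u=xa+(1-x)b$, $v=xb+(1-x)a$, one gets the pleasant closed forms
\[
T(a,b)=\frac{s}{\arctan s},\qquad J(x)=1+\frac{(2x-1)^2s^2}{3}.
\]

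Because $J$ is strictly increasing on $\bigl[\frac12,1\bigr]$, the double inequality $J(\alpha)<T<J(\beta)$ is equivalent to the pair of scalar inequalities $(2\alpha-1)^2<f(s)$ and $(2\beta-1)^2>f(s)$, where
\[
f(s)=\frac{3(s-\arctan s)}{s^2\arctan s}.
\]
Hence the best $\alpha$ is governed by $\inf_{s\in(0,1)}f$ and the best $\beta$ by $\sup_{s\in(0,1)}f$. A short expansion gives $\lim_{s\to0^+}f(s)=1$, while $\lim_{s\to1^-}f(s)=\frac{12}{\pi}-3$, and I note that $\frac12\bigl(1+\sqrt{\frac{12}{\pi}-3}\bigr)=\frac12\bigl(1+\sqrt{f(1^-)}\bigr)$. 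Everything then follows once $f$ is shown to be strictly decreasing on $(0,1)$: the infimum $\frac{12}{\pi}-3$ and supremum $1$ are both limit values that are never attained, so $(2\alpha-1)^2<f(s)$ holds for all $s$ exactly when $(2\alpha-1)^2\le\frac{12}{\pi}-3$, i.e. $\alpha\le\frac12\bigl(1+\sqrt{\frac{12}{\pi}-3}\bigr)$, and $(2\beta-1)^2>f(s)$ holds for all $s$ exactly when $(2\beta-1)^2\ge1$; combined with the hypothesis $\beta\le1$ this forces $\beta=1$ (and $\beta=1$ is admissible since $T<\overline{C}=J(1)$). The sharpness of both constants is immediate from the two boundary limits by continuity.

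The monotonicity of $f$ is the crux. I would record $f'(s)<0$ equivalently, after clearing positive factors, as
\[
\phi(s):=s\arctan s+\frac{s^2}{1+s^2}-2(\arctan s)^2>0,\qquad s\in(0,1),
\]
a function with $\phi(s)=\frac{8}{45}s^6+\cdots$ near the origin, so that the low-order terms cancel and a crude estimate will not suffice. Rather than attack $\phi$ directly, I would differentiate down a chain of auxiliary functions, each vanishing at $s=0$. One computes $\phi'(s)=\dfrac{N(s)}{(1+s^2)^2}$ with $N(s)=(s^4-2s^2-3)\arctan s+s(3+s^2)$ and $N(0)=0$; using $\frac{s^4-2s^2-3}{1+s^2}=s^2-3$ the next derivative collapses cleanly to $N'(s)=4s\,P(s)$ with $P(s)=s-(1-s^2)\arctan s$ and $P(0)=0$; and finally
\[
P'(s)=\frac{2s^2}{1+s^2}+2s\arctan s,
\]
which is patently positive on $(0,1)$. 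Running the implications back up, namely $P'>0\Rightarrow P>0\Rightarrow N'>0\Rightarrow N>0\Rightarrow\phi'>0\Rightarrow\phi>0\Rightarrow f'<0$, delivers the strict monotonicity of $f$. The main obstacle is exactly arranging this cascade so that every auxiliary function is anchored at $s=0$ and has a sign-definite derivative; the reward is that the only genuinely nontrivial positivity one must verify, that of $P'$, is entirely transparent.
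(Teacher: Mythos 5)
Your proof is correct, and it takes a genuinely different route from the paper's. The paper fixes $t=a/b>1$, forms the difference $\overline{C}(pa+(1-p)b,pb+(1-p)a)-T(a,b)$ for each parameter $p$ separately, and analyzes the sign of an auxiliary function $f(t)$ through four successive derivatives $f_1,\dots,f_4$ whose coefficients are quartic polynomials in $p$; it must then locate turning points $t_0<t_1<t_2<t_3$, check the sign of each derivative at $t=1$ and at infinity for the two critical values $p=\lambda$ and $p=1$, and run separate limiting arguments for sharpness. Your normalization $a=1+s$, $b=1-s$ instead collapses $J(x)$ to $1+\tfrac{(2x-1)^2s^2}{3}$ and decouples the parameter entirely: both bounds, and the sharpness of both constants, reduce to the single statement that $f(s)=\frac{3(s-\arctan s)}{s^2\arctan s}$ is strictly decreasing on $(0,1)$ with boundary limits $1$ and $\frac{12}{\pi}-3$. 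I checked your computations: the identity $uv=ab+x(1-x)(a-b)^2$, the reduction $f'<0\Leftrightarrow\phi>0$, the expansion $\phi(s)=\frac{8}{45}s^6+\cdots$, the simplifications $N'(s)=4sP(s)$ via $\frac{s^4-2s^2-3}{1+s^2}=s^2-3$, and the positivity of $P'$ are all correct, and the cascade $P'>0\Rightarrow P>0\Rightarrow N>0\Rightarrow\phi>0$ is properly anchored at $s=0$ at each stage. What your approach buys is economy and transparency — one monotone, parameter-free function settles existence and optimality simultaneously, much as the paper itself does for Theorem~\ref{seiffert-th1.2} via the ratio $\frac{T-A}{C-A}$; what the paper's approach buys is that it works directly with the difference of means and only with polynomial sign analysis after the first differentiation, at the cost of the case-by-case bookkeeping. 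One small point worth making explicit in a final write-up: since the problem restricts $\alpha,\beta\in\bigl[\frac12,1\bigr]$, the passage from $(2\beta-1)^2\ge 1$ to $\beta=1$ uses both $\beta\ge\frac12$ and $\beta\le1$, which you do invoke.
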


In~\cite{seiffert16} the author posed an unsolved problem: Find the greatest value $\alpha_1$ and the least value $\beta_1$ such that the double inequality
\begin{equation}\label{seiffert-eq1.3}
    \alpha_1 C(a,b)+(1-\alpha_1)A(a,b)<T(a,b)<\beta_1 C(a,b)+(1-\beta_1)A(a,b)
\end{equation}
holds for $a,b>0$ with $a\ne b$.
\par
The following Theorem~\ref{seiffert-th1.2}, the second main result of this paper, solves this problem.

\begin{thm}\label{seiffert-th1.2}
for $a,b>0$ with $a\ne b$, the double inequality~\eqref{seiffert-eq1.3} holds if and only if $\alpha_1\le \frac{4}{\pi}-1$ and $\beta_1\ge \frac{1}{3}$.
\end{thm}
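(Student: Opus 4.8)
The plan is to collapse the two--variable inequality~\eqref{seiffert-eq1.3} to a single--variable one by homogeneity. Since $T$, $C$, $A$ are symmetric and positively homogeneous of degree one, I may assume $a>b$ and put $t=\frac{a-b}{a+b}\in(0,1)$. Then $a+b=2A(a,b)$ and $\frac{a-b}{a+b}=t$ give $\frac{C(a,b)}{A(a,b)}=1+t^2$ and $\frac{T(a,b)}{A(a,b)}=\frac{t}{\arctan t}$, so dividing~\eqref{seiffert-eq1.3} by $A(a,b)$ turns the inequality $\lambda C+(1-\lambda)A<T$ into $1+\lambda t^2<\frac{t}{\arctan t}$, that is, into $\lambda<F(t)$ with
\[
F(t)=\frac{t-\arctan t}{t^2\arctan t},\qquad t\in(0,1).
\]
Hence the left half of~\eqref{seiffert-eq1.3} holds for all admissible $a,b$ exactly when $\alpha_1\le\inf_{(0,1)}F$ and the right half exactly when $\beta_1\ge\sup_{(0,1)}F$. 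The whole theorem will follow once I prove the two strict bounds $\frac{4}{\pi}-1<F(t)<\frac13$ on $(0,1)$ and identify the boundary values $\lim_{t\to0^+}F(t)=\frac13$ (from $\arctan t=t-\frac{t^3}{3}+\cdots$) and $F(1)=\frac{4-\pi}{\pi}=\frac{4}{\pi}-1$.

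For the upper bound I would clear denominators: $F(t)<\frac13$ is equivalent to $\arctan t>\frac{3t}{3+t^2}$. With $\phi(t)=\arctan t-\frac{3t}{3+t^2}$ one has $\phi(0)=0$ and, after simplification,
\[
\phi'(t)=\frac{4t^4}{(1+t^2)(3+t^2)^2}>0,\qquad t>0,
\]
so $\phi>0$ on $(0,\infty)$; this step is routine.

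The delicate half is the lower bound, and it is the main obstacle, because the target constant $\frac{4}{\pi}-1$ equals $F(1)$ \emph{exactly}. The inequality is therefore tangent to equality at the excluded endpoint $t=1$, so no crude estimate and no single monotonicity argument can work. Clearing denominators, $F(t)>\frac{4}{\pi}-1$ is equivalent to $\arctan t<\frac{\pi t}{\pi+(4-\pi)t^2}$; set $\eta(t)=\frac{\pi t}{\pi+(4-\pi)t^2}-\arctan t$. The key observation is that $\eta(0)=\eta(1)=0$, so positivity of $\eta$ cannot come from monotonicity; instead I would compute $\eta'$ and find that, over a common denominator, its sign on $(0,1)$ is that of
\[
N(t)=4t^2\bigl[\pi(\pi-3)-(4-\pi)t^2\bigr],
\]
a quadratic in $t^2$. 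Thus $\eta'$ vanishes just once in $(0,1)$, at $t_*=\sqrt{\pi(\pi-3)/(4-\pi)}$, passing from $+$ to $-$, where $0<t_*^2<1$ reduces to $\pi^2-2\pi-4<0$, valid since $\pi<1+\sqrt5$. Consequently $\eta$ rises from $\eta(0)=0$ to a positive maximum at $t_*$ and falls back to $\eta(1)=0$, forcing $\eta>0$ on $(0,1)$, which is precisely the lower bound.

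It remains to read off the best constants. Because $F<\frac13$ with $\lim_{t\to0^+}F=\frac13$, the supremum $\sup_{(0,1)}F=\frac13$ is not attained, so the right half of~\eqref{seiffert-eq1.3} holds for all $a\ne b$ if and only if $\beta_1\ge\frac13$; because $F>\frac{4}{\pi}-1$ with $\lim_{t\to1^-}F=\frac{4}{\pi}-1$, the infimum $\inf_{(0,1)}F=\frac{4}{\pi}-1$ is not attained, so the left half holds if and only if $\alpha_1\le\frac{4}{\pi}-1$. This yields the asserted sharp constants, the only genuine difficulty being the endpoint tangency in the lower bound, resolved by the single sign change of $\eta'$.
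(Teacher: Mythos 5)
Your argument is correct, and after the (shared) reduction to the one-variable function $F(t)=\frac{t/\arctan t-1}{t^2}$ on $(0,1)$ it diverges from the paper's proof in the key analytic step. The paper substitutes $t=\tan\theta$, rewrites the ratio as $\frac{\cot\theta}{\theta}-\frac{1}{\sin^2\theta}+1$, and invokes the Bernoulli-number series for $\cot\theta$ and $\csc^2\theta$ to exhibit it as $1-\sum_{n\ge1}\frac{n2^{2n+1}}{(2n)!}|B_{2n}|\theta^{2n-2}$, a visibly strictly decreasing function of $\theta$ on $\bigl(0,\frac{\pi}{4}\bigr)$; monotonicity then delivers both bounds and their sharpness simultaneously from the endpoint limits $\frac13$ and $\frac4\pi-1$. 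You instead prove the two bounds separately and entirely by elementary calculus: the routine inequality $\arctan t>\frac{3t}{3+t^2}$ via $\phi'(t)=\frac{4t^4}{(1+t^2)(3+t^2)^2}>0$, and the delicate endpoint-tangent inequality $\arctan t<\frac{\pi t}{\pi+(4-\pi)t^2}$ via the single sign change of $\eta'$ governed by $N(t)=4t^2\bigl[\pi(\pi-3)-(4-\pi)t^2\bigr]$ (I checked both derivative computations and the verification $0<t_*^2<1$; they are right, and your sharpness discussion correctly uses only the strict bounds plus the two limits, not monotonicity of $F$). The trade-off: your route needs no special-function expansions and is self-contained, at the cost of a case split and a more delicate second half; the paper's route proves the strictly stronger statement that the ratio is monotone, which packages existence and optimality of both constants into one computation but leans on the series \eqref{eq2.0} and \eqref{eq2.2}.
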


\section{Proof of Theorem~\ref{seiffert-th1.1}}

In this section, we supply a proof of Theorem~\ref{seiffert-th1.1}.
\par
For simplicity, we denote two numbers in~\eqref{seiffert-th1.1-constant} by $\lambda$ and $\mu$ respectively.
\par
It is clear that, in order to prove the double inequality~\eqref{seiffert-eq1.4}, it suffices to show
\begin{equation}\label{seiffert-eq2.1}
    T(a,b)>\overline{C}\bigl(\lambda a+(1-\lambda)b,\lambda b+(1-\lambda)a\bigr)
\end{equation}
and
\begin{equation}\label{seiffert-eq2.2}
    T(a,b)<\overline{C}\bigl(\mu a+(1-\mu)b,\mu b+(1-\mu)a\bigr).
\end{equation}
From definitions~\eqref{seiffert-eq1.1} and~\eqref{seiffert-eq1.2} we see that both $T(a,b)$ and $\overline{C}(a,b)$ are symmetric and homogenous of degree $1$. Hence, without loss of generality, we assume that $a>b$. If replacing $\frac{a}b>1$ by $t>1$ and letting $p\in \bigl(\frac12,1\bigr)$, then
\begin{multline}\label{seiffert-eq2.3}
\overline{C}\bigl(pa+(1-p)b,pb+(1-p)a\bigr)-T(a,b)\\*
=\frac{[pt+(1-p)]^2+[pt+(1-p)][p+(1-p)t]+[p+(1-p)t]^2}{6(1+t)\arctan\frac{t-1}{t+1}}bf(t),
\end{multline}
where
\begin{equation}\label{seiffert-eq2.4}
\begin{split}
f(t)&=4\arctan \frac{t-1}{t+1}\\*
&\quad-\frac{3(t^2-1)}{[pt+(1-p)]^2+[pt+(1-p)][p+(1-p)t]+[p+(1-p)t]^2}.
\end{split}
\end{equation}
Standard computations lead to
\begin{align}\label{seiffert-eq2.5}
    f(1)&=0,\\\label{seiffert-eq2.6}
    \lim_{t\to\infty}f(t)&=\pi-\frac{3}{p^2-p+1},
\end{align}
and
\begin{equation}\label{seiffert-eq2.7}
    f'(t)=\frac{f_1(t)}{h_1(t)},
\end{equation}
where
\begin{align}
\begin{split}\label{seiffert-eq2.8}
    f_1(t)&=\bigl(4p^4-8p^3+18p^2-14p+1\bigr)t^4-4\bigl(4p^4-8p^3+9p^2-5p+1\bigr)t^3\\
    &\quad+6\bigl(4p^4-8p^3+6p^2-2p+1\bigr)t^2-4\bigl(4p^4-8p^3+9p^2-5p+1\bigr)t\\
    &\quad+4p^4-8p^3+18p^2-14p+1,
\end{split}\\
    f_1(1)&=0,\label{seiffert-eq2.9}
\end{align}
and
\begin{equation*}
h_1(t)=\bigl\{[pt+(1-p)]^2+[pt+(1-p)][p+(1-p)t]+[p+(1-p)t]^2\bigr\}^2\bigl(1+t^2\bigr).
\end{equation*}
Let
\begin{equation*}
f_2(t)=\frac{f_1'(t)}4,\quad f_3(t)=\frac{f_2'(t)}3,\quad\text{and}\quad f_4(t)=\frac{f_3'(t)}2.
\end{equation*}
Then, by standard argument, we have
\begin{align}\label{seiffert-eq2.10}
\begin{split}
    f_2(t)&=\bigl(4p^4-8p^3+18p^2-14p+1\bigr)t^3-3\bigl(4p^4-8p^3+9p^2-5p+1\bigr)t^2\\
    &\quad+3\bigl(4p^4-8p^3+6p^2-2p+1\bigr)t-\bigl(4p^4-8p^3+9p^2-5p+1\bigr),
\end{split}\\\label{seiffert-eq2.11}
    f_2(1)&=0,\\\label{seiffert-eq2.12}
\begin{split}
    f_3(t)&=\bigl(4p^4-8p^3+18p^2-14p+1\bigr)t^2-2\bigl(4p^4-8p^3+9p^2-5p+1\bigr)t\\
    &\quad+4p^4-8p^3+6p^2-2p+1,
\end{split}\\\label{seiffert-eq2.13}
    f_3(1)&=6p^2-6p,\\\label{seiffert-eq2.14}
    f_4(t)&=\bigl(4p^4-8p^3+18p^2-14p+1\bigr)t-\bigl(4p^4-8p^3+9p^2-5p+1\bigr),\\\label{seiffert-eq2.15}
    f_4(1)&=9p^2-9p.
\end{align}
\par
If $p=\lambda$, then the quantities~\eqref{seiffert-eq2.6}, \eqref{seiffert-eq2.13}, and~\eqref{seiffert-eq2.15} become
\begin{gather}\label{seiffert-eq2.16}
    \lim_{t\to\infty} f(t)=0,\\\label{seiffert-eq2.17}
    f_3(1)=\frac{18}{\pi}-6<0,\\\label{seiffert-eq2.18}
    f_4(1)=\frac{27}{\pi}-9<0,
\end{gather}
and
\begin{equation}\label{seiffert-eq2.19}
    4p^4-8p^3+18p^2-14p+1=\frac{36+18\pi-9\pi^2}{\pi^2}>0.
\end{equation}
Thus, from~\eqref{seiffert-eq2.8}, \eqref{seiffert-eq2.10}, \eqref{seiffert-eq2.12}, \eqref{seiffert-eq2.14}, and~\eqref{seiffert-eq2.19}, it is very easy to obtain that
\begin{align}\label{seiffert-eq2.20}
    \lim_{t\to\infty} f_1(t)&=\infty,\\\label{seiffert-eq2.21}
    \lim_{t\to\infty} f_2(t)&=\infty,\\\label{seiffert-eq2.22}
    \lim_{t\to\infty} f_3(t)&=\infty,\\\label{seiffert-eq2.23}
    \lim_{t\to\infty} f_4(t)&=\infty.
\end{align}
From~\eqref{seiffert-eq2.14} and~\eqref{seiffert-eq2.19}, it is clear that the function $f_4(t)$ is strictly increasing on $[1,\infty)$, and so, by virtue of~\eqref{seiffert-eq2.18} and~\eqref{seiffert-eq2.23}, there exists a point $t_0>1$ such that $f_4(t)<0$ on $[1,t_0)$ and $f_4(t)>0$ on $(t_0,\infty)$. Hence, the function $f_3(t) $ is strictly decreasing on $[1,t_0]$ and strictly increasing on $[t_0,\infty)$. Similarly, by~\eqref{seiffert-eq2.17} and~\eqref{seiffert-eq2.22}, there exists a point $t_1>t_0>1$ such that $f_2(t)$ is strictly decreasing on $[1,t_1]$ and strictly increasing on $[t_1,\infty)$, and, by~\eqref{seiffert-eq2.11} and~\eqref{seiffert-eq2.21}, there exists a point $t_2>t_1>1$ such that $f_1(t)$ is strictly decreasing on $[1,t_2]$ and strictly increasing on $[t_2,\infty)$. Further, by~\eqref{seiffert-eq2.7}, \eqref{seiffert-eq2.9}, and~\eqref{seiffert-eq2.20}, there exists a point $t_3>t_2>1$ such that $f(t)$ is strictly decreasing on $[1,t_3]$ and strictly increasing on $[t_3,\infty)$.
Finally, by~\eqref{seiffert-eq2.3} and~\eqref{seiffert-eq2.16}, it is deduced that the function $f(t)$ is negative on $(1,\infty)$. The inequality~\eqref{seiffert-eq2.1} is thus proved.
\par
If $p=\mu=1$, then the function~\eqref{seiffert-eq2.8} becomes
\begin{equation}\label{seiffert-eq2.24}
    f_1(t)=(t-1)^4>0
\end{equation}
for $t>1$. Combining this with~\eqref{seiffert-eq2.7} and~\eqref{seiffert-eq2.5} results in that $f(t)$ is strictly increasing and positive on $(1,\infty)$. Therefore, the inequality~\eqref{seiffert-eq2.2} is obtained.
\par
Combining the inequalities~\eqref{seiffert-eq2.1} and~\eqref{seiffert-eq2.2} with the monotonicity of $J(x)$ defined by~\eqref{J(x)-dfn-eq}, the double inequality~\eqref{seiffert-eq1.4} is established for all  $\alpha\le\lambda$ and $\beta\ge 1$.
\par
For any given number $p$ satisfying $1>p>\lambda$, it is obvious that the limit~\eqref{seiffert-eq2.6} is positive. This positivity together with~\eqref{seiffert-eq2.3} and~\eqref{seiffert-eq2.4} implies that for $1>p>\lambda$ there exists $T_0=T_0(p)>1$ such that the inequality
\begin{equation*}
    \overline{C}\bigl(pa+(1-p)b,pb+(1-p)a\bigr)>T(a,b)
\end{equation*}
holds for $\frac{a}b\in (T_0,\infty)$. This tells us that the constant $\lambda$ is the best possible.
\par
For $\frac12<p<\mu=1$, the quantity~\eqref{seiffert-eq2.13} is positive. Accordingly, there exists a number $\delta=\delta(p)>0$ such that the function $f_3(t)$ is negative on $(1,1+\delta)$. This negativity together with~\eqref{seiffert-eq2.3}, \eqref{seiffert-eq2.5}, \eqref{seiffert-eq2.7} and~\eqref{seiffert-eq2.9} implies that for any  $\frac12<p<\mu=1$, there exists $\delta=\delta(p)>0$ such that the inequality
\begin{equation*}
    T(a,b)>\overline{C}\bigl(pa+(1-p)b,pb+(1-p)a\bigr)
\end{equation*}
is valid for $\frac{a}b\in (T_0,\infty)$. Consequently, the number $\mu$ is the best possible. The proof of Theorem~\ref{seiffert-th1.1} is complete.

\section{Proof of Theorem~\ref{seiffert-th1.2}}

In order to prove Theorem~\ref{seiffert-th1.2}, we need the following Lemmas.

\begin{lem}\label{B-2q-posit}
The Bernoulli numbers $B_{2n}$ for $n\in\mathbb{N}$ have the property
\begin{equation}\label{|B_{2n}|}
(-1)^{n-1}B_{2n}=|B_{2n}|,
\end{equation}
where the Bernoulli numbers $B_i$ for $i\ge0$ are defined by
\begin{equation}
\frac{x}{e^x-1}=\sum_{i=0}^\infty \frac{B_i}{n!}x^i =1-\frac{x}2+\sum_{i=1}^\infty B_{2i}\frac{x^{2i}}{(2i)!}, \quad \vert x\vert <2\pi.
\end{equation}
\end{lem}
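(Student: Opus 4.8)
The plan is to derive the classical closed-form expression of $B_{2n}$ in terms of the Riemann zeta function and then read off the sign from the positivity of $\zeta(2n)$. First I would symmetrize the defining generating function: adding $\frac{x}{2}$ to both sides of $\frac{x}{e^x-1}=1-\frac{x}2+\sum_{n\ge1}B_{2n}\frac{x^{2n}}{(2n)!}$ produces the even function $\frac{x}{2}\coth\frac{x}{2}=1+\sum_{n\ge1}B_{2n}\frac{x^{2n}}{(2n)!}$, and after the rescaling $x\mapsto 2x$ this becomes $x\coth x=1+\sum_{n\ge1}\frac{2^{2n}B_{2n}}{(2n)!}x^{2n}$. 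This is the series I will eventually match coefficients against.

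Next I would bring in Euler's expansion of the circular cotangent. Starting from the infinite product $\frac{\sin z}{z}=\prod_{k\ge1}\bigl(1-\frac{z^2}{k^2\pi^2}\bigr)$ and taking its logarithmic derivative yields the Mittag--Leffler partial-fraction form $z\cot z=1-2\sum_{k\ge1}\frac{z^2}{k^2\pi^2-z^2}$. Expanding each summand as a geometric series in $z^2/(k^2\pi^2)$ and interchanging the order of the double summation then gives $z\cot z=1-2\sum_{n\ge1}\frac{\zeta(2n)}{\pi^{2n}}z^{2n}$, valid for $|z|<\pi$.

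I would then connect the two series through the substitution $z=ix$, using $\coth x=i\cot(ix)$ so that $x\coth x=(ix)\cot(ix)$. Since $(ix)^{2n}=(-1)^{n}x^{2n}$, the cotangent expansion transforms into $x\coth x=1+2\sum_{n\ge1}\frac{(-1)^{n-1}\zeta(2n)}{\pi^{2n}}x^{2n}$. Comparing the coefficient of $x^{2n}$ here with the Bernoulli-number series obtained in the first step yields Euler's formula $B_{2n}=\frac{(-1)^{n-1}2(2n)!}{(2\pi)^{2n}}\zeta(2n)$.

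Finally, because $\zeta(2n)=\sum_{k\ge1}k^{-2n}>0$ for every $n\in\mathbb{N}$ while the prefactor $\frac{2(2n)!}{(2\pi)^{2n}}$ is manifestly positive, the sign of $B_{2n}$ is exactly $(-1)^{n-1}$; equivalently $(-1)^{n-1}B_{2n}>0$, which is precisely the asserted identity $(-1)^{n-1}B_{2n}=|B_{2n}|$. The only genuinely non-elementary ingredient is Euler's cotangent expansion, so the main obstacle is to justify the logarithmic differentiation of the sine product and the interchange in the double summation; both rest on the locally uniform convergence of the Weierstrass product and, for a paper of this kind, may simply be cited from a standard reference rather than reproved in detail.
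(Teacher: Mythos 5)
Your argument is correct and rests on exactly the same key fact as the paper's proof, namely Euler's formula $\zeta(2n)=(-1)^{n-1}\frac{(2\pi)^{2n}}{2(2n)!}B_{2n}$ combined with the positivity of $\zeta(2n)$; the only difference is that you derive this formula from the cotangent expansion while the paper simply cites it from Andrews--Askey--Roy. All of your intermediate steps (the symmetrization to $x\coth x$, the Mittag--Leffler expansion of $z\cot z$, the substitution $z=ix$, and the coefficient comparison) check out.
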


\begin{proof}
In~\cite[p.~16 and p.~56]{aar}, it is listed that for $q\ge1$
\begin{equation}\label{zeta(2q)-B(2q)}
\zeta(2q)=(-1)^{q-1}\frac{(2\pi)^{2q}}{(2q)!}\frac{B_{2q}}2,
\end{equation}
where $\zeta$ is the Riemann zeta function defined by
\begin{equation}
  \zeta(s)=\sum_{n=1}^\infty\frac1{n^s}.
\end{equation}
From~\eqref{zeta(2q)-B(2q)}, the formula~\eqref{|B_{2n}|} follows.
\end{proof}

\begin{lem}\label{lem2.0}
For $0<|x|<\pi$,
\begin{equation}\label{eq2.0}
   \cot x=\frac{1}{x}-\sum_{n=1}^{\infty}\frac{2^{2n}|B_{2n}|}{(2n)!}x^{2n-1}.
\end{equation}
\end{lem}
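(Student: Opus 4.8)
The plan is to derive the expansion of $\cot x$ directly from the generating function of the Bernoulli numbers displayed in Lemma~\ref{B-2q-posit}, by first obtaining a power series for $x\coth x$ and then passing to the circular cotangent through a purely imaginary substitution, with the sign bookkeeping supplied by Lemma~\ref{B-2q-posit}.

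First I would manipulate the defining series. Starting from $\frac{x}{e^x-1}=1-\frac{x}2+\sum_{i=1}^\infty B_{2i}\frac{x^{2i}}{(2i)!}$, adding $\frac{x}2$ to both sides and using the elementary identity
\[
\frac{x}{e^x-1}+\frac{x}{2}=\frac{x}{2}\cdot\frac{e^x+1}{e^x-1}=\frac{x}{2}\coth\frac{x}{2},
\]
I obtain $\frac{x}{2}\coth\frac{x}{2}=1+\sum_{i=1}^\infty B_{2i}\frac{x^{2i}}{(2i)!}$. Replacing $x$ by $2x$ then yields
\[
x\coth x=1+\sum_{n=1}^\infty\frac{2^{2n}B_{2n}}{(2n)!}x^{2n}.
\]
Since the zeros of $e^x-1$ nearest the origin are at $\pm2\pi\mathrm i$, the original series converges for $|x|<2\pi$, so after the rescaling $x\mapsto2x$ this last series converges for $|x|<\pi$.

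Next I would pass from the hyperbolic to the circular cotangent. Writing $\mathrm i$ for the imaginary unit and using $\coth(\mathrm i x)=-\mathrm i\cot x$, the substitution $x\mapsto\mathrm i x$ in the expansion of $x\coth x$ gives
\[
x\cot x=1+\sum_{n=1}^\infty\frac{2^{2n}B_{2n}}{(2n)!}(\mathrm i x)^{2n}=1+\sum_{n=1}^\infty\frac{(-1)^n2^{2n}B_{2n}}{(2n)!}x^{2n},
\]
valid for $|x|<\pi$. Finally, invoking Lemma~\ref{B-2q-posit} in the form $(-1)^nB_{2n}=-|B_{2n}|$ makes every term uniformly negative, and dividing by $x$ produces the claimed identity~\eqref{eq2.0} on $0<|x|<\pi$.

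The computation is entirely routine; the only points demanding a little care are the tracking of the sign factor $(-1)^n$ arising from $(\mathrm i x)^{2n}$ and its combination with the sign of $B_{2n}$ through Lemma~\ref{B-2q-posit}, together with the verification that the common domain of convergence is exactly $|x|<\pi$, which is forced by the poles of $\cot x$ at $\pm\pi$.
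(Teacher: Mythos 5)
Your derivation is correct, but it takes a more self-contained route than the paper, whose entire proof of this lemma is a citation: it simply quotes the cotangent expansion from Abramowitz--Stegun (formula 4.3.70) and combines it with the sign identity $(-1)^{n-1}B_{2n}=|B_{2n}|$ of Lemma~\ref{B-2q-posit}. You instead reconstruct that handbook formula from first principles: the identity $\frac{x}{e^x-1}+\frac{x}{2}=\frac{x}{2}\coth\frac{x}{2}$ turns the Bernoulli generating function into the expansion of $x\coth x$ on $|x|<\pi$ after rescaling, and the substitution $x\mapsto \mathrm{i}x$ together with $\coth(\mathrm{i}x)=-\mathrm{i}\cot x$ transfers it to $x\cot x$, with Lemma~\ref{B-2q-posit} converting $(-1)^nB_{2n}$ into $-|B_{2n}|$ exactly as needed. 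All the steps check out, including the convergence bookkeeping ($|x|<2\pi$ for the generating function, hence $|x|<\pi$ after the rescaling, matching the poles of $\cot$ at $\pm\pi$). What your approach buys is independence from the external reference and an explicit audit trail for the sign, which is precisely the delicate point the paper's one-line proof leaves to the reader; what it costs is length, since the paper is content to lean on a standard table of series.
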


\begin{proof}
This may be derived readily from combining the formula~\cite[p.~75, 4.3.70]{abram} with the identity~\eqref{|B_{2n}|}.
\end{proof}

\begin{lem}\label{lem2.2}
For $0<|x|<\pi$,
\begin{equation}\label{eq2.2}
\frac{1}{\sin^2x}=\frac{1}{x^2} +\sum_{n=1}^{\infty}\frac{2^{2n}(2n-1)|B_{2n}|}{(2n)!}x^{2(n-1)}.
\end{equation}
\end{lem}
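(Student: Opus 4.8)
The plan is to derive the claimed identity \eqref{eq2.2} simply by differentiating the cotangent expansion provided by Lemma~\ref{lem2.0}. The key observation is the elementary trigonometric identity
\begin{equation*}
\frac{\td}{\td x}\cot x=-\frac{1}{\sin^2x},
\end{equation*}
so that the left-hand side of \eqref{eq2.2} is, up to a sign, exactly the derivative of $\cot x$. Thus the whole lemma reduces to differentiating a known series term by term and reindexing.

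First I would recall from Lemma~\ref{lem2.0} that, for $0<|x|<\pi$,
\begin{equation*}
\cot x=\frac{1}{x}-\sum_{n=1}^{\infty}\frac{2^{2n}|B_{2n}|}{(2n)!}x^{2n-1}.
\end{equation*}
The power series $\sum_{n=1}^{\infty}\frac{2^{2n}|B_{2n}|}{(2n)!}x^{2n-1}$ has radius of convergence $\pi$, which is precisely the region of validity asserted in both lemmas. Since a power series may be differentiated term by term throughout the open interval on which it converges, I would differentiate both sides with respect to $x$ to obtain
\begin{equation*}
-\frac{1}{\sin^2x}=-\frac{1}{x^2}-\sum_{n=1}^{\infty}\frac{2^{2n}(2n-1)|B_{2n}|}{(2n)!}x^{2n-2}.
\end{equation*}
Multiplying through by $-1$ and writing $2n-2=2(n-1)$ then yields \eqref{eq2.2} exactly, completing the argument.

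The only point that genuinely requires justification is the legitimacy of term-by-term differentiation, and this is where I would place the small amount of rigor. It is immediate once one notes that the radius of convergence equals $\pi$: on every compact subinterval of $(0,\pi)$ (equivalently, for $0<|x|<\pi$) the differentiated series converges uniformly and represents the derivative of the original sum. Consequently there is no substantive obstacle here; the lemma is a routine corollary of Lemma~\ref{lem2.0}, and the entire proof amounts to one differentiation together with this standard convergence remark.
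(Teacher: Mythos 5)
Your proof is correct and follows exactly the paper's own route: the paper likewise obtains \eqref{eq2.2} by noting that $\frac{1}{\sin^2 x}=-\frac{\td}{\td x}(\cot x)$ and differentiating the series \eqref{eq2.0} from Lemma~\ref{lem2.0}. Your added remark justifying term-by-term differentiation via the radius of convergence is a harmless (and welcome) extra detail that the paper leaves implicit.
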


\begin{proof}
Since
$$
\frac{1}{\sin ^2x}=\csc^2x=-\frac{\td}{\td x}(\cot x),
$$
the formula~\eqref{eq2.2} follows from differentiating~\eqref{eq2.0}.
\end{proof}

Now we are ready to prove Theorem~\ref{seiffert-th1.2}. It is easy to see that the double inequality~\eqref{seiffert-eq1.3} is equivalent to
\begin{equation}\label{seiffert-eq3.3}
\alpha_1< \frac{T(a,b)-A(a,b)}{C(a,b)-A(a,b)}<\beta_1.
\end{equation}
Without loss of generality, we assume $a>b>0$ and let $x=\frac{a}{b}$. Then $x>1$ and
\begin{equation*}
\frac{T(a,b)-A(a,b)}{C(a,b)-A(a,b)}=\frac{\frac{x-1}{2\arctan \frac{x-1}{x+1}} -\frac{x+1}{2}}{\frac{x^2+1}{x+1}-\frac{x+1}{2}}.
\end{equation*}
Let $t=\frac{x-1}{x+1}$. Then $t\in (0,1)$ and
\begin{equation*}
\frac{T(a,b)-A(a,b)}{C(a,b)-A(a,b)}= \frac{\frac{t}{\arctan t}-1}{t^2}.
\end{equation*}
Let $t=\tan \theta$ for $\theta\in \bigl(0,\frac{\pi}{4}\bigr)$. Then
\begin{equation*}
  \frac{T(a,b)-A(a,b)}{C(a,b)-A(a,b)}=  \frac{\frac{\tan\theta}{\theta}-1}{(\tan\theta)^2}=\frac{\cot \theta}{\theta}-\frac{1}{\sin ^2 \theta}+1.
\end{equation*}
By Lemmas~\ref{lem2.0} and~\ref{lem2.2}, we have
\begin{align*}
\frac{\cot \theta}{\theta}-\frac{1}{\sin ^2 \theta}+1
&=1-\sum_{n=1}^{\infty}\frac{2^{2n}}{(2n)!}|B_{2n}|\theta^{2n-2} -\sum_{n=1}^{\infty}\frac{(2n-1)2^{2n}}{(2n)!}|B_{2n}|\theta^{2n-2}\\
&=1-\sum_{n=1}^{\infty}\frac{n2^{2n+1}}{(2n)!}|B_{2n}|\theta^{2n-2}
\end{align*}
which is strictly decreasing on $\bigl(0,\frac{\pi}{4}\bigr)$. Moreover, by L'H\^{o}spital rule and standard argument, we have
\begin{equation*}
\lim_{x\to 0^+}=\frac{1}{3}\quad \text{and}\quad \lim_{x\to(\pi/4)^-}=\frac{4}{\pi}-1.
\end{equation*}
The proof of Theorem~\ref{seiffert-th1.2} is complete.


\begin{thebibliography}{99}

\bibitem{abram}
M. Abramowitz and I. A. Stegun (Eds), \textit{Handbook of Mathematical Functions with Formulas, Graphs, and Mathematical Tables}, National Bureau of Standards, Applied Mathematics Series \textbf{55}, 4th printing, with corrections, Washington, 1965.

\bibitem{aar}
G. E. Andrews, R. Askey, and R. Roy, \textit{Special Functions}, Encyclopedia of Mathematics and its Applications \textbf{71}, Cambridge University Press, Cambridge, 1999.

\bibitem{seiffert15}
Y.-M. Chu and S.-W. Hou, \emph{Sharp bounds for Seiffert mean in terms of contraharmonic mean}, Abstr. Appl. Anal. \textbf{2012} (2012), in press.

\bibitem{seiffert1}
Y.-M. Chu, Y.-F. Qiu, M.-K. Wang, and G.-D. Wang, \emph{The optimal convex combination bounds of arithmetic and harmonic means for the Seiffert's mean}, J. Inequal. Appl. \textbf{2010} (2010), Article ID 436457, 7 pages; Available online at \url{http://dx.doi.org/10.1155/2010/436457}.

\bibitem{seiffert3}
Y.-M. Chu, M.-K. Wang, and W.-M. Gong, \emph{Two sharp double inequalities for Seiffert mean}, J. Inequal. Appl. \textbf{2011}, 2011:44, 7 pages; Available online at \url{http://dx.doi.org/10.1186/1029-242X-2011-44}.

\bibitem{seiffert10}
Y.-M. Chu, M.-K. Wang, and Y.-F. Qiu, \emph{An optimal double inequality between power-type Heron and Seiffert means}, J. Inequal. Appl. \textbf{2010} (2010), Article ID 146945, 11 pages; Available online at \url{http://dx.doi.org/10.1155/2010/146945}.

\bibitem{seiffert11}
Y.-M. Chu, M.-K. Wang, S.-L. Qiu, and Y.-F. Qiu, \emph{Sharp generalized Seiffert mean bounds for Toader mean}, Abstr. Appl. Anal. \textbf{2011} (2011), Article ID 605259, 8 pages; Available online at \url{http://dx.doi.org/10.1155/2011/605259}.

\bibitem{seiffert12}
Y.-M. Chu, Y.-F. Qiu, and M.-K. Wang, \emph{Sharp power mean bounds for the combination of Seiffert and geometric means}, Abstr. Appl. Anal. \textbf{2010} (2010), Article ID 108920, 12 pages; Available online at \url{http://dx.doi.org/10.1155/2010/108920}.

\bibitem{seiffert4}
Y.-M. Chu, C. Zong and G.-D. Wang, \emph{Optimal convex combination bounds of Seiffert and geometric means for the arithmetic mean}, J. Math. Inequal. \textbf{5} (2011), no.~3, 429\nobreakdash--434.

\bibitem{seiffert5}
S.-Q. Gao, H.-Y. Gao, and W.-Y. Shi, \emph{Optimal convex combination bounds of the centroidal and harmonic means for the Seiffert mean}, Int. J. Pure Appl. Math. \textbf{70} (2011), no.~5, 701\nobreakdash--709.

\bibitem{emv-log-convex-simple.tex}
B.-N. Guo and F. Qi, \textit{A simple proof of logarithmic convexity of extended mean values}, Numer. Algorithms \textbf{52} (2009), 89\nobreakdash--92; Available online at \url{http://dx.doi.org/10.1007/s11075-008-9259-7}.

\bibitem{Guo-Qi-Filomat-2011-May-12.tex}
B.-N. Guo and F. Qi, \textit{The function $(b^x-a^x)/x$: Logarithmic convexity and applications to extended mean values}, Filomat \textbf{25} (2011), no.~4, 63\nobreakdash--73; Available online at \url{http://dx.doi.org/10.2298/FIL1104063G}.

\bibitem{seiffert14}
P. A. H\"{a}st\"{o}, \emph{A monotonicity property of ratios of symmetric homogeneous means}, J. Inequal. Pure Appl. Math. \textbf{3} (2002), no.~5, Art.~71, 23 pages; Available online at \url{www.emis.de/journals/JIPAM/article223.html}.

\bibitem{seiffert6}
S.-W. Hou and Y.-M. Chu, \emph{Optimal convex combination bounds of root-square and harmonic root-square means for Seiffert mean}, Int. J. Math. Anal. \textbf{5} (2011), no.~39, 1897\nobreakdash--1904.

\bibitem{seiffert7}
S.-W. Hou and Y.-M. Chu, \emph{Optimal convex combination bounds of root-square and harmonic root-square means for Seiffert mean}, Int. Math. Forum \textbf{6} (2011), no.~57, 2823\nobreakdash--2831.

\bibitem{background-Jiang-Qi.tex}
W.-D. Jiang and F. Qi, \textit{Some sharp inequalities involving Seiffert and other means and their concise proofs}, Math. Inequal. Appl. \textbf{15} (2012), in press.

\bibitem{seiffert16}
J. Fang, \emph{Some Convex Combination Bounds for Arithmetic and the Second Seiffert Means}, Master's Thesis, Hebei University, Anhui Province, China, May 2011.

\bibitem{seiffert2}
H. Liu and X.-J. Meng, \emph{The optimal convex combination bounds for Seiffert's mean}, J. Inequal. Appl. \textbf{2011} (2011), Article ID 686834, 9 pages; Available online at \url{http://dx.doi.org/10.1155/2011/686834}.

\bibitem{abstract}
F. Qi, \textit{Generalized abstracted mean values}, J. Inequal. Pure Appl. Math. \textbf{1} (2000), no.~1, Art.~4; Available online at \url{http://www.emis.de/journals/JIPAM/article97.html}.

\bibitem{royal-98}
F. Qi, \textit{Generalized weighted mean values with two parameters}, R. Soc. Lond. Proc. Ser. A Math. Phys. Eng. Sci. \textbf{454} (1998), no.~1978, 2723\nobreakdash--2732; Available online at \url{http://dx.doi.org/10.1098/rspa.1998.0277}.

\bibitem{Cheung-Qi-Rev.tex}
F. Qi, P. Cerone, S. S. Dragomir, and H. M. Srivastava, \textit{Alternative proofs for monotonic and logarithmically convex properties of one-parameter mean values}, Appl. Math. Comput. \textbf{208} (2009), no.~1, 129\nobreakdash--133; Available online at \url{http://dx.doi.org/10.1016/j.amc.2008.11.023}.

\bibitem{seiffert13}
H.-J. Seiffert, \emph{Aufgabe $\beta$ 16}, Die Wurzel \textbf{29} (1995), 221\nobreakdash--222.

\bibitem{seiffert8}
M.-K. Wang, Y.-F. Qiu, and Y.-M. Chu, \emph{Sharp bounds for Seiffert means in terms of Lehmer means}, J. Math. Inequal. \textbf{4} (2010), no.~4, 581\nobreakdash--586.

\bibitem{seiffert9}
C. Zong and Y.-M. Chu, \emph{An inequality among identric, geometric and Seiffert's means}, Int. Math. Forum \textbf{5} (2010), no.~26, 1297\nobreakdash--1302.

\end{thebibliography}
\end{document}